\documentclass[a4paper,11pt,dvipsnames]{amsart}
\usepackage{xcolor}
\usepackage{graphicx}
\usepackage{amsmath}
\usepackage{amssymb}
\usepackage{mathtools}
\usepackage{microtype}

\allowdisplaybreaks

\theoremstyle{plain}
\newtheorem{theorem}{Theorem}[section]
\newtheorem{proposition}[theorem]{Proposition}
\newtheorem{lemma}[theorem]{Lemma}

\newtheorem{remark}[theorem]{Remark}

\newtheorem{maintheorem}{Theorem}

\theoremstyle{definition}

\newcommand{\C}{\mathbb{C}}
\newcommand{\R}{\mathbb{R}}
\newcommand{\dd}{\,d}
\newcommand{\tr}{\operatorname{tr}}
\newcommand{\Id}{\mathrm{Id}}
\newcommand{\Area}{\operatorname{Area}}
\newcommand{\wind}{\operatorname{wind}}
\numberwithin{equation}{section}

\usepackage{hyperref}
\hypersetup{colorlinks=true,linkcolor=blue,citecolor=blue,urlcolor=blue,
  pdftitle={Constant mean curvature disks with circular boundary}}

\begin{document}

\title{Constant mean curvature disks with circular boundary}

\author{Davi Maximo}
\address{Department of Mathematics, University of Pennsylvania, Philadelphia, PA 19104, USA}

\author{Ivaldo Nunes}
\address{Departamento de Matem\'atica, Universidade Federal do Maranh\~ao, S\~ao Lu\'{\i}s, MA, Brazil}

\date{\today}

\begin{abstract}
We prove that a compact immersed disk of constant mean curvature in
$\R^3$ whose boundary is mapped diffeomorphically onto a round circle is
a planar disk or a spherical cap.
\end{abstract}

\maketitle

\section{Introduction}\label{sec:intro}

A classical theorem of H. Hopf \cite{Hopf} states that a closed immersed
surface of genus zero with constant mean curvature in $\R^3$ is a round
sphere. The corresponding statement for surfaces with boundary, in the
simplest possible case, has remained open for several decades: is a
compact immersed disk of constant mean curvature bounded by a round
circle necessarily a planar disk or a spherical cap? The problem was posed
by R. Gulliver and R. Kusner as Problem~1.4 at the 1984 AMS Summer
Institute \cite{Brothers1986}. It appears as Problem~2.8 in the recent
survey of R. L\'opez \cite{Lopez}, and we refer to Chapter~7 of the
monograph \cite{LopezBook} for its history.

The disk hypothesis is essential: N. Kapouleas \cite{Kapouleas}
constructed compact immersed constant mean curvature surfaces of higher
genus spanning a circle. For disks, rigidity was known under additional
assumptions: stability, by L. Al\'{\i}as, L\'opez and B. Palmer
\cite{AliasLopezPalmer}; graphicality over the plane of the circle
(for another proof, see \cite{LopezSmallCaps}); area at most that of the
large spherical cap with the same nonzero mean curvature and boundary,
by L\'opez and S. Montiel \cite{LopezMontielBoundedArea}; or $|H|r=1$,
where $r$ is the circle's radius, by F. Brito and R. Sa Earp
\cite{BritoEarp}. For $H=0$, the height above the boundary plane is
harmonic with zero boundary values, so the surface is planar. The free
boundary analogue for disks meeting the boundary of a ball orthogonally
was proved by Nitsche \cite{Nitsche}. Under stability, A. Ros and
E. Vergasta \cite{RosVergasta} left a possible genus-one case, subsequently
excluded by the second author \cite{Nunes}.

Throughout, a smooth immersion of a compact surface with boundary is
smooth up to the boundary with injective differential at every point,
including boundary points, and a spherical cap is either of the two
closed regions into which a circle divides a sphere containing it. The
purpose of this paper is to prove:

\begin{samepage}
\begin{maintheorem}\label{thm:A}
Let $M$ be a smooth compact surface diffeomorphic to the closed disk and
let $x\colon M\to\R^3$ be a smooth immersion with constant mean curvature
$H$. If $x|_{\partial M}$ is a diffeomorphism onto a round circle, then
$x$ is an embedding onto a planar disk, if $H=0$, or onto a spherical
cap, if $H\neq0$.
\end{maintheorem}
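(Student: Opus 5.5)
The plan is to adapt Hopf's argument to the disk with boundary, replacing ``no holomorphic quadratic differentials on the sphere'' by an index count on $\partial M$. First I would dispose of $H=0$ as in the introduction: the height over the plane of the circle is harmonic with zero boundary values, so it vanishes. For $H\neq0$ I would uniformize $M$ conformally as the closed unit disk $\overline{D}\subset\C$. The Hopf differential $\Phi=\langle x_{zz},N\rangle\,dz^2=\phi(z)\,dz^2$ is then holomorphic on $D$ and smooth up to $\partial D$. Its zeros are exactly the umbilics. The goal is to prove $\phi\equiv0$. Then $x$ is totally umbilic with $H\neq0$, so it lies in a sphere of radius $1/|H|$. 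Since $x|_{\partial M}$ is a diffeomorphism onto the circle, $x$ covers one of the two caps bounded by it exactly once, and is therefore an embedding onto that cap.

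Next I would compute the boundary behaviour of $\phi$. On $\partial D$, write $z=e^{is}$ and let $T$, $\nu$ be the unit tangent and conormal of the boundary curve. Then $\operatorname{Im}\bigl(z^2\phi\bigr)$ is a positive multiple (the conformal factor) of the geodesic torsion $\tau_g=\langle dN(T),\nu\rangle$ of $\partial M$ in $x(M)$. Let $\theta(s)$ be the angle between $N$ and the normal of a fixed sphere (or the plane) containing the circle. Since every sphere through the circle has zero geodesic torsion along it, the standard Joachimsthal-type computation gives $\tau_g=\theta'(s)/|x_s|$, up to a sign. Hence, along $\partial D$,
\[
\operatorname{Im}\bigl(z^2\phi(z)\bigr)=\lambda(s)\,\theta'(s),\qquad \lambda>0 .
\]
The classical constant-contact-angle case is exactly $\theta'\equiv0$, where Schwarz reflection immediately gives $z^2\phi\equiv0$.

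The argument would then be an index count. Suppose $\phi\not\equiv0$, and set $g=z^2\phi$, which is holomorphic on $D$ and vanishes at $0$. If $g$ has no zeros on $\partial D$, the argument principle gives $\wind(g|_{\partial D},0)\geq 2$, and boundary zeros can be handled by a half-index version. On the other hand, $\theta$ is a periodic function on the circle. I would combine the sign structure of $\operatorname{Im} g=\lambda\theta'$ with a second boundary identity to bound the number of sign changes, and thereby show $\wind(g)\le 1$, a contradiction. The second identity would come from the Codazzi equations, or equivalently from $\operatorname{Re} g$ expressed through the normal curvature of the circle. Specifically, on the circle of radius $r$ we have $\kappa_n^2+\kappa_g^2=1/r^2$, so $\operatorname{Re} g$ is determined by $H$, $r$, $\theta$ up to the conformal factor. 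I would also use the flux identity $\int_{\partial M}\nu\,ds=-2H\pi r^2 e_3$ and the first variation identity $\int_{\partial M}\langle x,\nu\rangle\,ds=2\Area+2H\int_M\langle x,N\rangle$ to constrain the mean and oscillation of $\theta$.

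I expect this last step to be the main obstacle: proving that the boundary data force $\wind(g)<2$. The danger is that $\theta$ can oscillate and generate winding, which is exactly what stability or graph hypotheses ruled out in earlier work. My first attempt would express $g|_{\partial D}$ explicitly as $\lambda\,(a(\theta)+i\theta')$, with $a(\theta)=\kappa_n$-type terms built from $H$, $r$ and $\cos\theta$. I would then show that the curve $s\mapsto(a(\theta(s)),\theta'(s))$ winds at most once around $0$. This is a phase-plane argument, using that $a$ depends only on $\theta$ and that $\theta$ is periodic, which is where the flux constraint should enter.
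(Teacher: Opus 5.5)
\textbf{There is a genuine gap.} Your reduction is correct as far as it goes. With $g=z^2\phi$, on $\partial D$ the imaginary part $\operatorname{Im} g$ is proportional to the geodesic torsion, hence to $\theta'$. The real part $\operatorname{Re} g$ is a nonvanishing multiple of $\langle x,N\rangle+H$, because the normal curvature of the unit circle is $-\langle x,N\rangle$. If the contact angle is constant, $\operatorname{Im} g$ is harmonic with zero boundary values, so $g$ is a real constant and $g\equiv g(0)=0$.

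But the step you call the main obstacle is the whole content of the theorem, and the phase-plane mechanism you propose cannot supply it. The winding number of $s\mapsto(a(\theta(s)),\theta'(s))$ about the origin is a signed count of the local extrema of $\theta$ lying in $\{a>0\}$. These are the crossings of the positive real axis: minima count $+1$, maxima $-1$. Periodicity does not bound this count. Take a profile that stays in $\{a<0\}$ except for $k$ dips across one endpoint of the arc $\{a>0\}$; it has winding $\pm k$. If it is $k$-fold symmetric ($k\geq2$) with suitably tuned depths, it satisfies $\int(\langle x,N\rangle+H)\dd\ell=0$ and $\int\langle N,e_3\rangle e_r\dd\ell=0$. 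These are all the constraints the flux formula imposes on the boundary. The Minkowski identity you quote involves the interior quantities $\Area(M)$ and $\int_M\langle x,N\rangle$, so it gives no pointwise boundary information either. So no argument built only from these inputs can exclude winding $\geq2$. What is missing is a global identity that pins down $\langle x,N\rangle$ pointwise on $\partial M$.

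That is what the paper supplies in Theorem~\ref{thm:B}. Because $M$ is a disk, the closed force form $\beta=(N+Hx)\times dx$ has a primitive $P$, with $dP(E_1)\times dP(E_2)=(1+H\langle x,N\rangle)(N+Hx)$. Two integral identities (Green's formula, and Stokes applied to $G\beta$) have boundary terms fixed by the flux formula, and together they give $\int_M(1+H\langle x,N\rangle)(N+Hx)\dd A_g=-\pi(1-H^2)e_3$. Hence the horizontal projection $q$ of $\gamma=P|_{\partial M}$ satisfies $\int\det(q,q')\dd\theta=2\pi(1-H^2)$. The flux formula gives the same value for $\int|\gamma'|^2\dd\theta$. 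Hurwitz's inequality then forces the vertical velocity $\langle x,N\rangle+H$ of $\gamma$ to vanish, and your Schwarz-reflection step gives umbilicity.

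Your last sentence also needs an argument. A local isometry of the disk into the sphere $S$ whose boundary goes once around the circle could a priori cover all of $S$, with preimage counts $m$ and $m+1$ on the two caps. The paper excludes $m\geq1$ by Gauss--Bonnet: $\Area_g(M)=2\pi(1+n)/H^2<4\pi/H^2$, with $n=\langle N,e_3\rangle$ constant on $\partial M$. It then concludes with a degree lemma after stereographic projection from a point of $S\setminus x(M)$. The same degree lemma is needed in your $H=0$ case to get an embedding.
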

\end{samepage}
\pagebreak[0] 

While this manuscript was being prepared, we became aware of independent
work of J. M. Espinar \cite{Espinar2026}, who also obtained
Theorem~\ref{thm:A} by a different method based on the Lawson
correspondence.

Normalize the circle to be the unit circle in the plane $\{x_3=0\}$, and
let $N$ be the unit normal along $x$, with the sign fixed in
Section~\ref{sec:prelim}. The classical balancing formula \cite{KKS}
gives
\[
\int_{\partial M}\langle x,N\rangle\dd\ell=-2\pi H
\]
(Lemma~\ref{lem:flux}), so the boundary average of $\langle x,N\rangle$
is $-H$. Spherical caps satisfy the pointwise relation
$\langle x,N\rangle=-H$ along the boundary: they meet the plane of the
circle at a constant angle. Theorem~\ref{thm:A} is a consequence of the
following boundary rigidity statement, which is the main new result of
the paper.

\begin{maintheorem}\label{thm:B}
Under the hypotheses of Theorem~\textup{\ref{thm:A}}, normalized as in
Section~\textup{\ref{sec:prelim}}, $\langle x,N\rangle=-H$ at every point
of $\partial M$. In particular, $\langle N,e_3\rangle$ is constant along
$\partial M$ and the shape operator equals $H\,\Id$ at every boundary
point.
\end{maintheorem}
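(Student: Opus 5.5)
The plan is to reduce Theorem~\ref{thm:B} to the vanishing of a single boundary function, the derivative of the contact angle. Along $\partial M$ let $T$ be the unit tangent of the circle and $\nu$ the outward conormal. Since $\partial M$ is mapped onto the unit circle in $\{x_3=0\}$, we have $T=e_3\times x$ there. The pair $\nu,N$ spans the plane orthogonal to $T$, which contains $x$ and $e_3$. We can therefore write
\[
\nu=\cos\theta\,x+\sin\theta\,e_3,\qquad N=\mp(\sin\theta\,x-\cos\theta\,e_3)
\]
for a smooth angle function $\theta$ on $\partial M$, with the sign fixed as in Section~\ref{sec:prelim}. Then $\langle x,N\rangle$ is a fixed trigonometric function of $\theta$. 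Differentiating $N$ along $T$ and using that the circle has no torsion gives $\theta'(s)=\pm\,\mathrm{II}(T,\nu)$, which is the geodesic torsion of $\partial M$. By Lemma~\ref{lem:flux} the average of $\langle x,N\rangle$ is already $-H$. Hence it suffices to prove $\theta'\equiv0$, i.e.\ $\mathrm{II}(T,\nu)\equiv0$ on $\partial M$; then $\langle x,N\rangle$ is constant and must equal $-H$.

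The conclusion about the shape operator then follows from the Hopf differential. Take a conformal parameter $z$ on the unit disk, which exists by uniformization, with $\partial M=\{|z|=1\}$. The Hopf function $\Phi=\langle x_{zz},N\rangle$ is holomorphic because $H$ is constant. On $|z|=1$, a direct computation shows that $\operatorname{Im}(z^2\Phi)$ is a positive multiple of $\mathrm{II}(T,\nu)$, with the conformal factor as the multiple. So if $\mathrm{II}(T,\nu)\equiv0$, the holomorphic function $z^2\Phi$ has real boundary values and is therefore a real constant. Since it vanishes at $z=0$, we get $\Phi\equiv0$. The surface is then totally umbilic, so the shape operator equals $H\,\Id$ everywhere, and $\langle N,e_3\rangle=\pm\cos\theta$ is constant along $\partial M$.

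The key device for proving $\theta'\equiv0$ is the rotational Jacobi field $J=\langle e_3\times x,N\rangle$, which satisfies $LJ=\Delta J+|A|^2J=0$. On $\partial M$ we have $e_3\times x=T\perp N$, so $J|_{\partial M}=0$. Computing the conormal derivative gives
\[
\partial_\nu J=\langle e_3\times\nu,N\rangle+\langle T,dN(\nu)\rangle .
\]
Since $e_3\times\nu=\cos\theta\,T\perp N$, the first term vanishes and $\partial_\nu J=-\mathrm{II}(T,\nu)=\mp\theta'$. The goal is thus to show that the Dirichlet Jacobi field $J$ vanishes identically. Unique continuation (Cauchy data zero) would then give $\partial_\nu J\equiv 0$. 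Equivalently, the surface is rotationally symmetric, and a symmetric CMC disk spanning the circle is a Delaunay piece, hence a cap.

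The main obstacle is ruling out a nontrivial $J$, i.e.\ ruling out $0$ as a Dirichlet eigenvalue of $L$ with eigenfunction $J$; we have no stability assumption to do this. My first attempt would be to couple the two boundary quantities $\partial_\nu J$ and $\operatorname{Im}(z^2\Phi)$, which are proportional on $|z|=1$. I would then exploit that $\theta'$ has zero mean, being the derivative of a periodic function, and count zeros on both sides. On one side, Courant-type nodal information for $J$: the nodal lines of $J$ meet $\partial M$ at zeros of $\partial_\nu J$, and the number of such endpoints is controlled by the nodal structure of $J$. On the other side, the argument principle for $z^2\Phi$, which has at least a double zero at the origin, forces $\operatorname{Im}(z^2\Phi)$ to change sign at least four times on the circle unless $\Phi\equiv0$. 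The plan is to show that these two zero counts are incompatible unless both functions vanish. If that fails, I would fall back on a Pohozaev/Rellich-type integral identity for $J$ with the position vector field, tested against the flux identity of Lemma~\ref{lem:flux}, to force $\int_{\partial M}(\partial_\nu J)^2\,\langle x,\nu\rangle\dd\ell=0$.
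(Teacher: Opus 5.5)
The gap is the step you call the main obstacle, and it carries the entire content of the theorem.

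Your preliminary computations are correct. You have $\langle x,N\rangle=\mp\sin\theta$ and $\theta'=\pm\mathrm{II}(T,\nu)$. The field $J=\langle e_3\times x,N\rangle$ is a Jacobi field with $J=0$ and $\partial_\nu J=-\mathrm{II}(T,\nu)$ on $\partial M$. The argument that $z^2\Phi$, having real boundary values and vanishing at $0$, must vanish identically is a valid alternative to the Schwarz reflection step of Section~\ref{sec:A}.

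But $\theta'\equiv0$ says exactly that $\langle x,N\rangle$ is constant on $\partial M$. Together with Lemma~\ref{lem:flux}, this \emph{is} the first assertion of Theorem~\ref{thm:B}. By unique continuation for $L$ it is also equivalent to $J\equiv0$. (You state that implication backwards: $J\equiv0$ gives $\partial_\nu J\equiv0$ trivially, and unique continuation is what gives the converse.) So your reduction only reformulates the statement. No argument is actually given that excludes a nontrivial Dirichlet Jacobi field $J$.

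Neither proposed route fills this. The argument principle does give at least four sign changes of $\operatorname{Im}(z^2\Phi)$, hence of $\partial_\nu J$, when $\Phi\not\equiv0$. But Courant's theorem bounds the number of nodal domains of $J$ only in terms of the position of the eigenvalue $0$ in the Dirichlet spectrum of $L$. Without a stability or index hypothesis, that position is uncontrolled. You would have two lower bounds on the number of sign changes (two from $\int\theta'=0$, four from the argument principle) and no upper bound, hence no contradiction.

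The Rellich/Pohozaev fallback also fails, for two reasons. Even the identity $\int_{\partial M}(\partial_\nu J)^2\langle x,\nu\rangle\dd\ell=0$ would not suffice, since $\langle x,\nu\rangle=\cos\theta$ has no known sign. Moreover, such an identity does not use that $M$ is a disk, and any proof of $J\equiv0$ must use it. On the higher-genus surfaces of Kapouleas bounded by the circle, $e_3\times x$ cannot be everywhere tangent. If it were, its zeros would be isolated of index $+1$, and it is tangent and nonvanishing along $\partial M$. That contradicts Poincar\'e--Hopf, since $\chi(M)=1-2g<0$.

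The missing idea is the one the paper uses to bring in the disk topology while bypassing the spectrum of $L$ entirely:
\begin{itemize}
\item On the disk, the closed force form $\beta=(N+Hx)\times dx$ has a primitive $P$ with $dP(E_1)\times dP(E_2)=(1+Hs)(N+Hx)$.
\item Green's formula for $G=\frac12(|x|^2-1)$, Stokes' theorem for $G\beta$, and the flux formula give $\int_M(1+Hs)(N+Hx)\dd A_g=-\pi(1-H^2)e_3$.
\item Hence the horizontal projection $q$ of $\gamma=P\circ p$ satisfies $\int_0^{2\pi}\det(q,q')\dd\theta=2\pi(1-H^2)=\int_0^{2\pi}|\gamma'|^2\dd\theta$.
\item Hurwitz's inequality then forces $\gamma_3'=s+H\equiv0$.
\end{itemize}
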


By Theorem~\ref{thm:B}, the Hopf differential vanishes along the boundary.
Its holomorphicity and Schwarz reflection then imply that the immersion
is totally umbilical. A separate degree argument establishes
embeddedness. We give these arguments in Section~\ref{sec:A}.

The proof of Theorem~\ref{thm:B} relies on the closed form
$\beta=(N+Hx)\times dx$, used by L\'opez--Montiel
\cite{LopezMontielBoundedArea,LopezMontiel} and called the force form by
B. Smyth and G. Tinaglia \cite{SmythTinaglia}. On a disk $\beta$ has a
primitive $P\colon M\to\R^3$, and we
show that $dP(E_1)\times dP(E_2)=(1+H\langle x,N\rangle)(N+Hx)$ for any
positive orthonormal frame. Two integral identities, obtained from
Green's formula and from the closedness of $\beta$, evaluate the integral
of the right hand side to $-\pi(1-H^2)e_3$. By Stokes' theorem, the
horizontal projection $q$ of the boundary curve $\gamma=P|_{\partial M}$,
parametrized by the circular angle $\theta$, satisfies
\[
\int_0^{2\pi}\det(q,q')\dd\theta=2\pi(1-H^2).
\]
The flux formula gives the same value for the energy
$\int_0^{2\pi}|\gamma'|^2\dd\theta$. The isoperimetric inequality of
A. Hurwitz \cite{Hurwitz} then forces the vertical velocity of $\gamma$,
which is $\langle x,N\rangle+H$, to vanish.

\begin{remark}
In the proof of Theorem~\ref{thm:B}, the disk hypothesis is used to
obtain a global primitive of $\beta$. The identities of
Proposition~\ref{prop:identities} and, consequently,
\[
\int_M(1+H\langle x,N\rangle)(N+Hx)\dd A_g=-\pi(1-H^2)e_3
\]
remain valid for compact oriented constant mean curvature surfaces of
arbitrary genus whose boundary is mapped diffeomorphically onto the unit
circle, with the conventions of Section~\ref{sec:prelim}. In positive
genus, however, $\beta$ need not be exact. Applying Stokes' theorem after
cutting the surface introduces additional terms determined by its
periods, so the comparison between the boundary force curve's signed
area and energy no longer follows from these identities alone.
\end{remark}

\begin{remark}
When $|H|=1$ the energy of $\gamma$ vanishes, $\gamma$ is a point, and
Theorem~\ref{thm:B} says that the surface is tangent along its boundary
to the cylinder over the circle. Together with the argument of
Section~\ref{sec:A}, this recovers the hemisphere theorem of
Brito--Sa Earp \cite{BritoEarp}. In this endpoint case, the boundary
relation $\langle x,N\rangle=-H$ also follows directly from the flux
formula and $|\langle x,N\rangle|\leq1$.
\end{remark}

\begin{remark}
Integral identities and the flux formula have previously been used to
establish rigidity under additional assumptions, including graphicality
\cite{LopezSmallCaps} and stability \cite{AliasLopezPalmer}. The new
ingredient here is that the boundary terms of the
two identities of Proposition~\ref{prop:identities} are evaluated
completely by the flux formula, so that the resulting vector identity
can be compared with the energy of the force curve.
\end{remark}

This paper is organized as follows. In Section~\ref{sec:prelim} we fix
conventions and recall the flux formula and the force form. In
Section~\ref{sec:B} we prove Theorem~\ref{thm:B}, and in
Section~\ref{sec:A} we prove Theorem~\ref{thm:A}.

\subsection{AI Statement}
This work grew out of an investigation of a related open problem,
posed as Problem~2.9 in \cite{Lopez}: whether planar disks and spherical caps are the only stable compact
constant mean curvature surfaces bounded by a circle. This is known for
disks \cite{AliasLopezPalmer}, and we were attempting to bound the genus
of a stable surface.
In that investigation we used GPT-6 Astra
as a computational
assistant: at our direction it carried out a series of computations and
compiled a list of integral identities satisfied by compact constant
mean curvature surfaces bounded by a circle, in any genus. The identity
of Proposition~\ref{prop:vector} was among them. We then realized that
when $M$ is a disk, where the force form has a primitive, this identity
could be used to prove Theorem~\ref{thm:B}, and hence
Theorem~\ref{thm:A}. All statements and proofs in this paper, including
those originating in the model's computations, have been verified
independently by the authors, who take full responsibility for its
contents.

\subsection{Acknowledgements}
We thank Marcos Petrucio Cavalcante for many interesting discussions about the spherical cap problem over the years. D. M. and I. N. were partially supported by CNPq, Grant No. 444531/2024-6. I. N. was also partially supported by CNPq, Grants No. 403869/2024-2 and 400078/2025-2.

\section{Preliminaries}\label{sec:prelim}

\subsection{Conventions}
After a rigid motion and a dilation we assume that $x(\partial M)$ is the
unit circle in $\{x_3=0\}$. Let $p\colon S^1\to\partial M$ be
the diffeomorphism:
\[
x(p(\theta))=e_r(\theta)=(\cos\theta,\sin\theta,0),\quad
t=e_\theta(\theta)=(-\sin\theta,\cos\theta,0),
\]
so that $t=\frac{d}{d\theta}x(p(\theta))$.
We write $g$ for the induced metric, $dA_g$ for its area element, and
identify tangent vectors of $M$ with their images under $dx$. Let $\mu$
be the outward unit conormal along $\partial M$. Since $\partial M$ is
connected, $t\times\mu$ agrees with a global unit normal up to a constant
sign, and we fix the unit normal $N$ by
\begin{equation}\label{eq:normal}
N=t\times\mu\quad\text{along }\partial M .
\end{equation}
We orient $M$ by $N$: an orthonormal frame $(E_1,E_2)$ of $TM$ is
positive when $E_1\times E_2=N$. Throughout, $e_r$, $e_\theta$ and $e_3=(0,0,1)$ denote the fixed
vectors of $\R^3$ introduced above, while $(E_1,E_2)$ always denotes a
positive orthonormal frame of $TM$, identified with its image under $dx$. The induced orientation of $\partial M$ is then
$\tau=-t$, since $\mu\times(-t)=N$, so that for every one-form $\omega$
\begin{equation}\label{eq:induced}
\int_{\partial M}\omega=-\int_0^{2\pi}\omega(p'(\theta))\dd\theta .
\end{equation}
The shape operator $A$ is defined by $dN=-dx\circ A$, $H=\frac12\tr A$,
$|A|^2=\tr A^2$, and $\Delta=\operatorname{div}_g\nabla$, so that
$\Delta x=2HN$; a sphere of radius $R$ with inward normal has $H=1/R$.
Along $\partial M$ we set
\begin{equation}\label{eq:boundary}
s=\langle x,N\rangle,\quad n=\langle N,e_3\rangle,\quad
N=s\, e_r+n\, e_3,\quad \mu=N\times t=-n\, e_r+s\, e_3,
\end{equation}
so that $s^2+n^2=1$,
where we used $e_r\times e_\theta=e_3$ and $e_3\times e_\theta=-e_r$.
The function $s=\langle x,N\rangle$ is defined on all of $M$. Note that
$\langle x,\mu\rangle=-n$ on $\partial M$.

For the caps of the sphere of radius $R=\sqrt{1+h^2}$ centred at $he_3$,
\eqref{eq:normal} makes $N$ the inward normal on the cap contained in
$\{x_3\geq0\}$, so $H=1/R$, and the outward normal on the other cap, so
$H=-1/R$; in both cases $s=-H$ and $n=\pm h/R$ along the boundary.

\subsection{The flux formula}
For a smooth map $F\colon M\to\R^3$, $F\times dF$ denotes the
$\R^3$-valued one-form $v\mapsto F\times dF(v)$. Since $d(dF)=0$,
$d(\tfrac12F\times dF)(U,V)=dF(U)\times dF(V)$, and Stokes' theorem with
\eqref{eq:induced} gives
\begin{equation}\label{eq:vector-area}
\int_MdF(E_1)\times dF(E_2)\dd A_g
=-\frac12\int_0^{2\pi}F(p(\theta))\times\frac{d}{d\theta}F(p(\theta))\dd\theta
\end{equation}
for any positive orthonormal frame; the right hand side is invariant
under translations of $F$.

\begin{lemma}[Flux formula]\label{lem:flux}
We have $|H|\leq1$ and
\[
\int_MN\dd A_g=-\pi e_3,\qquad \int_0^{2\pi}s\dd\theta=-2\pi H,\qquad
\int_0^{2\pi}n\, e_r\dd\theta=0 .
\]
\end{lemma}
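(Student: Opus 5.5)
The plan is to derive all three identities from two applications of the divergence theorem, namely \eqref{eq:vector-area} with $F=x$ and the equation $\Delta x=2HN$, and then to read off $|H|\leq1$ from $s^2+n^2=1$.

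\emph{First identity.} Take $F=x$ in \eqref{eq:vector-area}. For a positive orthonormal frame we have $dx(E_1)\times dx(E_2)=E_1\times E_2=N$, so the left-hand side is $\int_MN\dd A_g$. On the boundary $x(p(\theta))=e_r$ and $\frac{d}{d\theta}x(p(\theta))=e_\theta$. Since $e_r\times e_\theta=e_3$, the right-hand side is
\[
-\frac12\int_0^{2\pi}e_3\dd\theta=-\pi e_3 .
\]
This gives $\int_MN\dd A_g=-\pi e_3$.

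\emph{Second and third identities.} Integrate $\Delta x=2HN$ over $M$ componentwise, using the divergence theorem for $\Delta=\operatorname{div}_g\nabla$. The boundary term is $\int_{\partial M}\langle\nabla x,\mu\rangle\dd\ell=\int_{\partial M}\mu\dd\ell$, because $\nabla x$ applied to $\mu$ is just $\mu$ viewed in $\R^3$. Since $|t|=1$, the arclength element on $\partial M$ equals $d\theta$. No orientation sign enters here, because $\dd\ell$ is the unsigned measure. By \eqref{eq:boundary}, $\mu=-n\,e_r+s\,e_3$, so
\[
2H\int_MN\dd A_g=\int_0^{2\pi}\bigl(-n\,e_r+s\,e_3\bigr)\dd\theta .
\]
Substituting the first identity turns the left-hand side into $-2\pi H e_3$. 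Each $e_r(\theta)$ is horizontal, so we may compare horizontal and vertical components. The horizontal components give $\int_0^{2\pi}n\,e_r\dd\theta=0$, and the $e_3$-components give $\int_0^{2\pi}s\dd\theta=-2\pi H$.

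\emph{The bound.} Since $s^2+n^2=1$ we have $|s|\leq1$ pointwise on $\partial M$. Hence $2\pi|H|=\bigl|\int_0^{2\pi}s\dd\theta\bigr|\leq2\pi$, that is, $|H|\leq1$.

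The only delicate point is sign bookkeeping. One must check that \eqref{eq:normal} gives $\mu=N\times t=-n\,e_r+s\,e_3$ exactly as recorded in \eqref{eq:boundary}. One must also check that the orientation reversal in \eqref{eq:induced} is already accounted for in \eqref{eq:vector-area}. As a check on the signs, for the spherical caps described above $s=-H$, which is consistent with $\int_0^{2\pi}s\dd\theta=-2\pi H$.
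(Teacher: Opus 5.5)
Your proof is correct and follows the paper's argument essentially step for step. You apply \eqref{eq:vector-area} with $F=x$ to get $\int_MN\dd A_g=-\pi e_3$, then use the divergence theorem for $\Delta x=2HN$ with $\mu=-n\,e_r+s\,e_3$ to obtain both boundary integrals, and deduce $|H|\leq1$ from $|s|\leq1$ on a boundary of length $2\pi$.
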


\begin{proof}
Apply \eqref{eq:vector-area} to $F=x$: since $dx(E_1)\times dx(E_2)=N$,
we get $\int_MN=-\frac12\int_0^{2\pi}e_r\times e_\theta\dd\theta=-\pi e_3$. Integrating
$\Delta x=2HN$, the divergence theorem gives
$\int_{\partial M}\mu\dd\ell=2H\int_MN=-2\pi He_3$, and decomposing $\mu$
by \eqref{eq:boundary} yields the two boundary integrals. Since $|s|\leq1$
and $\partial M$ has length $2\pi$, $|H|\leq1$.
\end{proof}

This is the unit-circle version of the balancing formula of \cite{KKS},
see also \cite[Lemma~1]{LopezMontiel}. Only Stokes' theorem on $M$ is
used, so it holds for immersions of any genus.

\subsection{The force form}
Let $B=A-H\Id$ and $T=N+Hx$, and consider the $\R^3$-valued one-form
\begin{equation}\label{eq:beta}
\beta=T\times dx .
\end{equation}
Since $H$ is constant, $dT=dN+H\,dx=-dx\circ B$, and therefore
\[
d\beta(E_1,E_2)=-(BE_1)\times E_2+(BE_2)\times E_1=-(\tr B)\,N=0 .
\]
Thus $\beta$ is closed. This form occurs in \cite{LopezMontielBoundedArea}
and \cite[\S2]{LopezMontiel}; it is called the force form in
\cite[\S4]{SmythTinaglia}, where it is shown that its periods are the
forces of cycles. Since $M$ is a disk, $\beta$ has a
primitive
\begin{equation}\label{eq:P}
P\colon M\to\R^3,\qquad dP=\beta,
\end{equation}
smooth up to the boundary and unique up to translation (integrate
$\beta$ along the radii of the closed unit disk). For $H=0$, $P$ is the
conjugate minimal surface. We use only the following two properties
of $P$.

\begin{lemma}\label{lem:P}
For any positive orthonormal frame,
\[
dP(E_1)\times dP(E_2)=\langle T,N\rangle\,T=(1+Hs)\,T .
\]
Along $\partial M$, in the parameter $\theta$,
\begin{equation}\label{eq:gamma-prime}
T=(s+H)\, e_r+n\, e_3,\qquad
\frac{d}{d\theta}P(p(\theta))=T\times t=(s+H)\, e_3-n\, e_r .
\end{equation}
\end{lemma}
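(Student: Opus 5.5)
Both assertions are pointwise linear algebra, so I will verify them directly at a point with a positive orthonormal frame $(E_1,E_2)$, $E_1\times E_2=N$.

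For the first identity, $dP(E_i)=\beta(E_i)=T\times E_i$. I will apply the vector identity
\[
(a\times b)\times(a\times c)=\langle a,b\times c\rangle\,a ,
\]
which follows from the BAC--CAB rule: $(a\times b)\times(a\times c)=\langle a\times b,c\rangle a-\langle a\times b,a\rangle c$, and the last term vanishes. With $a=T$, $b=E_1$, $c=E_2$ this gives
\[
dP(E_1)\times dP(E_2)=\langle T,E_1\times E_2\rangle\,T=\langle T,N\rangle\,T .
\]
Since $\langle T,N\rangle=\langle N+Hx,N\rangle=1+H\langle x,N\rangle=1+Hs$, the first claim follows.

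For the boundary formulas, I will use the decomposition \eqref{eq:boundary}. Along $\partial M$ we have $x=e_r$ and $N=s\,e_r+n\,e_3$, hence
\[
T=N+Hx=(s+H)\,e_r+n\,e_3 .
\]
By the chain rule and $dP=\beta$,
\[
\frac{d}{d\theta}P(p(\theta))=\beta(p'(\theta))=T\times dx(p'(\theta))=T\times t ,
\]
because $dx(p'(\theta))=\frac{d}{d\theta}x(p(\theta))=t=e_\theta$. Expanding with $e_r\times e_\theta=e_3$ and $e_3\times e_\theta=-e_r$ (both recorded in Section~\ref{sec:prelim}) gives
\[
T\times t=(s+H)\,e_3-n\,e_r .
\]

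No step here is a real obstacle. The only point needing care is the sign and orientation conventions. The identity for the cross products of images relies on the frame being positive, so that $E_1\times E_2=N$ rather than $-N$. The boundary derivative must be taken in the parameter $\theta$, which differs from the induced orientation $\tau=-t$ of $\partial M$.
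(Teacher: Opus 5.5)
Your proposal is correct and matches the paper's proof: you use the same identity $(a\times u)\times(a\times v)=\langle a,u\times v\rangle a$ with $a=T$, $u=E_1$, $v=E_2$, and the same boundary computation from $x=e_r$, \eqref{eq:boundary} and $dP(p')=T\times t$. Your BAC--CAB derivation of that identity and your remarks on orientation conventions supply details the paper leaves implicit.
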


\begin{proof}
For $a,u,v\in\R^3$ one has $(a\times u)\times(a\times v)=\langle a,u\times v\rangle a$,
which applied to $a=T$, $u=E_1$, $v=E_2$ gives the first claim since
$\langle T,N\rangle=1+Hs$. The second follows from $x=e_r$ on
$\partial M$, \eqref{eq:boundary} and $dP(p')=T\times dx(p')=T\times t$.
\end{proof}

\subsection{Differential identities}
Let $G=\frac12(|x|^2-1)$ and $x^\top=x-sN$, the tangential part of the
position vector. Then, using the Codazzi equation and $dH=0$ for the
third identity,
\begin{equation}\label{eq:identities}
\begin{aligned}
&\nabla G=x^\top,\qquad \Delta G=2+2Hs,\qquad \Delta N=-|A|^2N,\\
&\nabla s=-Ax^\top,\qquad \Delta s=-2H-|A|^2s .
\end{aligned}
\end{equation}
See, e.g., \cite[(2.1)]{Lopez} for the identities involving $\Delta G$
and $\Delta s$. Note that $G=0$ on $\partial M$ and
$\partial_\mu G=\langle x,\mu\rangle=-n$ there.

\section{Proof of Theorem \ref{thm:B}}\label{sec:B}

We start with two integral identities. The first is Green's formula; the
second uses the closedness of $\beta$. Neither uses that $M$ is a disk.

\begin{proposition}\label{prop:identities}
\begin{align}
\int_M(1+Hs)\,x\dd A_g&=H\int_MG\,N\dd A_g ,\label{eq:id1}\\
\int_M(s+HG)\,N\dd A_g&=\pi H\, e_3 .\label{eq:id2}
\end{align}
\end{proposition}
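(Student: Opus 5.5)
The plan is to derive \eqref{eq:id1} from Green's formula applied componentwise to $x$ and $G$. Then \eqref{eq:id2} should follow by integrating the exterior derivative of the one-form $G\beta$, which vanishes on $\partial M$, and combining the result with \eqref{eq:id1} and the flux formula.

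\emph{First identity.} For each component, Green's formula gives
\[
\int_M(G\,\Delta x-x\,\Delta G)\dd A_g=\int_{\partial M}(G\,\partial_\mu x-x\,\partial_\mu G)\dd\ell .
\]
By \eqref{eq:identities} and $\Delta x=2HN$, the left hand side equals $\int_M\bigl(2HGN-(2+2Hs)x\bigr)\dd A_g$. On $\partial M$ we have $G=0$, $\partial_\mu G=-n$ and $x=e_r$, so the right hand side reduces to $\int_0^{2\pi}n\,e_r\dd\theta$. This vanishes by Lemma~\ref{lem:flux}. Dividing by $2$ yields \eqref{eq:id1}.

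\emph{Second identity.} Consider the $\R^3$-valued one-form $G\beta=G\,T\times dx$. Since $\beta$ is closed, $d(G\beta)=dG\wedge\beta$. For a positive orthonormal frame, $dG(E_i)=\langle x,E_i\rangle$. Writing $x^\top=aE_1+bE_2$, we get
\[
d(G\beta)(E_1,E_2)=T\times(aE_2-bE_1)=T\times(N\times x^\top)=T\times(N\times x).
\]
The last equality holds because the normal part of $x$ contributes $sN\times N=0$. Expanding with the triple product gives $\langle T,x\rangle N-\langle T,N\rangle x$. Here $\langle T,N\rangle=1+Hs$ and $\langle T,x\rangle=s+H|x|^2=s+H+2HG$. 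Since $G=0$ on $\partial M$, Stokes' theorem gives
\[
\int_M(s+H+2HG)N\dd A_g=\int_M(1+Hs)x\dd A_g .
\]
By \eqref{eq:id1} the right hand side is $H\int_MGN\dd A_g$. Cancelling this term leaves $\int_M(s+HG)N\dd A_g=-H\int_MN\dd A_g$, which equals $\pi He_3$ by Lemma~\ref{lem:flux}.

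The only step requiring care is the sign bookkeeping in $d(G\beta)$: the identification $aE_2-bE_1=N\times x^\top$ uses the orientation convention $E_1\times E_2=N$. Neither argument uses that $M$ is a disk; only Stokes' theorem and Lemma~\ref{lem:flux} are needed.
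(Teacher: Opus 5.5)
Your proof is correct and follows the paper's argument: Green's second identity for $x$ and $G$ gives \eqref{eq:id1}, and Stokes' theorem for $G\beta$, which vanishes on $\partial M$, combined with \eqref{eq:id1} and Lemma~\ref{lem:flux} gives \eqref{eq:id2}. The one difference is a small simplification on your side: you note that $N\times x^\top=N\times x$ and so obtain $(s+H+2HG)N-(1+Hs)x$ directly, whereas the paper writes $H|x^\top|^2N-(1+Hs)x^\top$ and then substitutes $|x^\top|^2=2G+1-s^2$ and $x^\top=x-sN$, after which the $s^2N$ terms cancel.
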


\begin{proof}
Green's second identity for the components of $x$ and the function $G$
reads $\int_M(x\Delta G-G\Delta x)=\int_{\partial M}(x\,\partial_\mu G-G\,\partial_\mu x)$.
By \eqref{eq:identities} the left hand side is $2\int_M(1+Hs)x-2H\int_MGN$,
and since $G=0$ and $\partial_\mu G=-n$ on $\partial M$ the right hand side
is $-\int_0^{2\pi}n\, e_r\dd\theta=0$ by Lemma~\ref{lem:flux}. This
proves \eqref{eq:id1}.

For \eqref{eq:id2} consider the one-form $G\beta$. As $d\beta=0$,
$d(G\beta)=dG\wedge\beta$, and for a positive orthonormal frame
\begin{align*}
(dG\wedge\beta)(E_1,E_2)&=T\times\bigl(dG(E_1)E_2-dG(E_2)E_1\bigr)\\
&=T\times(N\times x^\top)
=H|x^\top|^2N-(1+Hs)\,x^\top,
\end{align*}
where we used $N\times(aE_1+bE_2)=aE_2-bE_1$, $\nabla G=x^\top$, and
$\langle T,x^\top\rangle=H|x^\top|^2$. Since $G$ vanishes on $\partial M$,
Stokes' theorem gives
\[
H\int_M|x^\top|^2N\dd A_g=\int_M(1+Hs)\,x^\top\dd A_g .
\]
Substituting $|x^\top|^2=2G+1-s^2$ on the left, $x^\top=x-sN$ on the
right, and using \eqref{eq:id1},
\[
2H\!\int_MGN+H\!\int_MN-H\!\int_Ms^2N=H\!\int_MGN-\int_MsN-H\!\int_Ms^2N ,
\]
that is, $\int_M(s+HG)N=-H\int_MN=\pi He_3$ by Lemma~\ref{lem:flux}.
\end{proof}

\begin{remark}
Identity \eqref{eq:id2} can also be obtained from Green's formula applied
to $s$ and to the vector field $f=xs-GN$, which satisfies
$(\Delta+|A|^2)f=-2T$ by \eqref{eq:identities}, and $f=se_r$,
$\partial_\mu f=e_3+e_r\,\partial_\mu s$ on $\partial M$.
\end{remark}

\begin{proposition}\label{prop:vector}
$\displaystyle\int_M(1+Hs)\,T\dd A_g=-\pi(1-H^2)\, e_3$.
\end{proposition}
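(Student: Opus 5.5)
Since $T=N+Hx$, we expand
\[
\int_M(1+Hs)T\dd A_g=\int_M(1+Hs)N\dd A_g+H\int_M(1+Hs)x\dd A_g ,
\]
and evaluate each piece with Proposition~\ref{prop:identities} and Lemma~\ref{lem:flux}. For the second term, \eqref{eq:id1} gives $H\int_M(1+Hs)x=H^2\int_MGN$. For the first, $\int_MN=-\pi e_3$ by Lemma~\ref{lem:flux}, so it equals $-\pi e_3+H\int_MsN$. Adding the two, the total is
\[
-\pi e_3+H\int_M(s+HG)N\dd A_g ,
\]
which is exactly the combination controlled by \eqref{eq:id2}. Substituting $\int_M(s+HG)N=\pi He_3$ gives $-\pi e_3+\pi H^2e_3=-\pi(1-H^2)e_3$, as claimed.

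The proof is therefore a bookkeeping argument with no real obstacle. All the work was done in establishing \eqref{eq:id1} and \eqref{eq:id2}, whose boundary terms were evaluated by the flux formula. The only care needed is the grouping: pair the $HsN$ term with the $H^2GN$ term so that \eqref{eq:id2} applies, and keep the signs consistent with the orientation conventions \eqref{eq:normal} and \eqref{eq:induced} that entered those identities.

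Since neither Proposition~\ref{prop:identities} nor Lemma~\ref{lem:flux} uses the disk hypothesis, this argument also confirms the remark that the vector identity holds in any genus. By Lemma~\ref{lem:P}, the integrand is $dP(E_1)\times dP(E_2)$, which is the form in which the identity will be used in the sequel.
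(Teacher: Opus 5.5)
Your proposal is correct and follows essentially the same route as the paper. You expand $T=N+Hx$, use \eqref{eq:id1} to turn $H\int_M(1+Hs)x$ into $H^2\int_M GN$, group it with $H\int_M sN$ so that \eqref{eq:id2} applies, and finish with $\int_M N=-\pi e_3$ from Lemma~\ref{lem:flux}.
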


\begin{proof}
Expanding $T=N+Hx$ and using \eqref{eq:id1}, \eqref{eq:id2} and
Lemma~\ref{lem:flux},
\begin{align*}
\int_M(1+Hs)T&=\int_MN+H\int_MsN+H\int_M(1+Hs)x\\
&=\int_MN+H\int_M(s+HG)N=-\pi e_3+\pi H^2e_3 .
\qedhere
\end{align*}
\end{proof}

We now use the primitive $P$ of \eqref{eq:P}. Let $\gamma(\theta)=P(p(\theta))$,
a smooth closed curve in $\R^3$, let $q=(\gamma_1,\gamma_2)$ be its
horizontal projection, and $\det(q,q')=q_1q_2'-q_2q_1'$. By
\eqref{eq:vector-area} applied to $F=P$, Lemma~\ref{lem:P}, and
Proposition~\ref{prop:vector},
\[
-\frac12\int_0^{2\pi}\gamma\times\gamma'\dd\theta=\int_M(1+Hs)T\dd A_g=-\pi(1-H^2)e_3 .
\]
Taking the third component, for which $(\gamma\times\gamma')_3=\det(q,q')$,
\begin{equation}\label{eq:area}
\int_0^{2\pi}\det(q,q')\dd\theta=2\pi(1-H^2).
\end{equation}
On the other hand, by \eqref{eq:gamma-prime} and $s^2+n^2=1$,
$|\gamma'|^2=(s+H)^2+n^2=1+2Hs+H^2$, so by Lemma~\ref{lem:flux}
\begin{equation}\label{eq:energy}
E:=\int_0^{2\pi}|\gamma'|^2\dd\theta=2\pi(1+H^2)-4\pi H^2=2\pi(1-H^2).
\end{equation}

Finally we recall Hurwitz's form of the isoperimetric inequality
\cite{Hurwitz}. If $q\colon\R/2\pi\mathbb Z\to\R^2\cong\C$ is smooth and
$q=\sum_{k\in\mathbb Z}a_ke^{ik\theta}$, then $\det(q,q')=\operatorname{Im}(\bar qq')$
and Parseval's identity gives
\begin{equation}\label{eq:hurwitz}
\int_0^{2\pi}|q'|^2\dd\theta-\int_0^{2\pi}\det(q,q')\dd\theta
=2\pi\sum_{k\in\mathbb Z}(k^2-k)|a_k|^2\geq0 ,
\end{equation}
with equality if and only if $a_k=0$ for $k\notin\{0,1\}$, i.e.\ $q$ is
constant or a positively oriented circle traversed once at constant
speed.

\begin{proof}[End of proof of Theorem \textup{\ref{thm:B}}]
By \eqref{eq:area}, \eqref{eq:hurwitz}, $|q'|^2=|\gamma'|^2-|\gamma_3'|^2$,
and \eqref{eq:energy},
\[
E=\int_0^{2\pi}\det(q,q')\dd\theta\leq\int_0^{2\pi}|q'|^2\dd\theta
\leq\int_0^{2\pi}|\gamma'|^2\dd\theta=E ,
\]
so both inequalities are equalities. Equivalently, using the Fourier
coefficients of $q$,
\[
0=\int_0^{2\pi}(s+H)^2\dd\theta
  +2\pi\sum_{k\in\mathbb Z}k(k-1)|a_k|^2,
\]
where every term is nonnegative. Equality in the second inequality gives
$\int_0^{2\pi}|\gamma_3'|^2=\int_0^{2\pi}(s+H)^2\dd\theta=0$ by
\eqref{eq:gamma-prime}, hence $s=-H$ on $\partial M$. (No division by
$E$ occurred, so $|H|=1$ is included.) Then $n^2=1-H^2$, and $n$ being
continuous on the circle, it is constant. Thus $N=-He_r+ne_3$ along
$\partial M$ with $n$ constant; differentiating in $\theta$ and using
$dN=-dx\circ A$ gives $At=Ht$, and since $A$ is self-adjoint with trace
$2H$, also $A\mu=H\mu$. Hence $A=H\Id$ on $\partial M$.
\end{proof}

\section{Proof of Theorem \ref{thm:A}}\label{sec:A}

If $H=0$, then $x_3$ is harmonic on $M$ and vanishes on $\partial M$, so
$x_3\equiv0$ and $x$ is an immersion into the plane whose boundary is a
diffeomorphism onto the unit circle; Lemma~\ref{lem:degree} below shows
that $x$ is an embedding onto the closed unit disk. Assume from now on
that $H\neq0$.

\subsection{Umbilicity}
In a conformal coordinate $z=u+iv$ with $g=e^{2\lambda}|dz|^2$, the Hopf
differential is $Q=\varphi\,dz^2$ with $\varphi=\langle x_{zz},N\rangle$;
it is a quadratic differential on the interior of $M$, its zeros are the
umbilical points, and the Codazzi equation reads
$\varphi_{\bar z}=\frac12e^{2\lambda}H_z$, so $Q$ is holomorphic when $H$
is constant \cite{Hopf}. By Theorem~\ref{thm:B}, $\varphi=0$ along
$\partial M$.

Near a boundary point we may take a conformal coordinate, smooth up to
the boundary, in which the boundary arc is a segment of $\{v=0\}$ and the
surface lies in $\{v\geq0\}$: let $v$ be the solution of $\Delta_gv=0$
on a half-disk boundary chart with $v=0$ on the boundary arc and smooth
nonnegative, nontrivial data on the interior arc, so that $v>0$ inside,
$v$ is smooth up to the open boundary arc, and $dv\neq0$ there by the
Hopf lemma; a harmonic conjugate $u$ of $v$ on a smaller simply
connected neighbourhood gives the coordinate $u+iv$. In this coordinate
$\varphi$ is holomorphic on a half-disk, continuous up to the diameter
and zero on it; by the Schwarz reflection principle it extends
holomorphically to the full disk and vanishes on the diameter, hence
identically. By the identity theorem on the connected interior of $M$,
$Q\equiv0$, so $A=H\Id$ on $M$.

Therefore $dT=-dx\circ B=0$, $T=N+Hx$ is constant, and $c=x+H^{-1}N$ is
a constant point with $|x-c|=1/|H|$: the image of $x$ lies on the sphere
$S$ of radius $R=1/|H|$ centred at $c$, and $x$ is a local isometry from
$(M,g)$ into $S$.

\subsection{Injectivity}
\begin{lemma}\label{lem:degree}
Let $F\colon M\to\R^2$ be a smooth immersion of a compact connected
oriented surface with boundary such that $F|_{\partial M}$ is a
diffeomorphism onto a Jordan curve $C$. Then $F$ is an embedding onto the
closed region $\overline\Omega$ bounded by $C$.
\end{lemma}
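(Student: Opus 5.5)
The plan is a degree and covering argument. Let $M^\circ=M\setminus\partial M$.

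First, $F(M)\subset\overline\Omega$. Since $F$ is an immersion between surfaces of the same dimension, it is a local diffeomorphism on $M^\circ$, hence an open map there. Set $U=F(M^\circ)\setminus C$. This set is open in $\R^2\setminus C$. It is also closed in $\R^2\setminus C$: if $y\notin C$ is a limit of points $F(m_k)$ with $m_k\in M^\circ$, compactness gives a limit point $m$ with $F(m)=y$, and $m\notin\partial M$ because $F(\partial M)=C$. So $U$ is a union of components of $\R^2\setminus C$. It is bounded, so it cannot contain the unbounded component. Hence $F(M)\subset\overline\Omega$.

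Next, $F|_{M^\circ}$ restricted to $F^{-1}(\Omega)$ is a covering. On $F^{-1}(\Omega)$ the map $F$ is a local diffeomorphism onto $\Omega$. It is proper: the preimage of a compact $K\subset\Omega$ is closed in $M$ and misses $\partial M$, since $F(\partial M)=C$ is disjoint from $K$. So it is compact in $M^\circ$. A proper local diffeomorphism is a finite covering, so the number of preimages is a constant $d\geq0$ on the connected set $\Omega$. The case $d=0$ is excluded: near a boundary point the immersion maps a half-disk chart diffeomorphically onto one side of $C$, and that side must be $\Omega$ since it lies in $\overline\Omega$. Hence $d\ge1$.

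To show $d=1$, I would use the collar. For a point $y\in\Omega$ very close to $C$, I would count its preimages near $\partial M$ and far from $\partial M$. Near $\partial M$: because $F|_{\partial M}$ is injective onto $C$ and $F$ is a local diffeomorphism at each boundary point, a compactness (tubular-neighbourhood) argument shows that $F$ is injective on a collar $V$ of $\partial M$ and maps it diffeomorphically onto a one-sided collar of $C$ in $\overline\Omega$. Far from $\partial M$: $F(M\setminus V)$ is compact and disjoint from $C$, so points of $\Omega$ sufficiently close to $C$ are not in it. Such a $y$ therefore has exactly one preimage, and $d=1$.

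It remains to check that $F$ is bijective onto $\overline\Omega$. Over $\Omega$, $d=1$ gives bijectivity from $F^{-1}(\Omega)=M^\circ$, using $F(M^\circ)\cap C=\emptyset$. That last fact holds because an interior point mapping into $C$ would have a neighbourhood whose open image meets the exterior of $C$. Over $C$, $F|_{\partial M}$ is already bijective. So $F$ is a bijective immersion of a compact space, and therefore a homeomorphism and an embedding onto $\overline\Omega$.

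I expect the main obstacle to be the collar injectivity step. I would argue it by contradiction: take sequences $m_k\neq m_k'$ converging to $\partial M$ with $F(m_k)=F(m_k')$. Their limits have the same image, so they coincide by injectivity on $\partial M$. This contradicts local injectivity of the immersion at that boundary point.
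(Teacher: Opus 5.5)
Your proof is correct, but it takes a different route from the paper. The paper uses degree theory. Since $M$ is connected and oriented, the Jacobian of $F$ has a constant sign $\varepsilon$. For $y\notin C$ this gives $\varepsilon\,\#F^{-1}(y)=\deg(F,y)=\wind(F|_{\partial M},y)$, which is $0$ outside $\overline\Omega$ and $\pm1$ in $\Omega$. So the exact preimage count comes in one step. The remaining points (no interior point maps to $C$, and a bijective immersion of a compact surface is an embedding) are handled as in your last paragraph.

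You instead get the count by point-set topology:
\begin{itemize}
\item the open--closed argument puts $F(M)$ inside $\overline\Omega$;
\item properness makes $F\colon F^{-1}(\Omega)\to\Omega$ a finite covering with a constant number of sheets $d$;
\item injectivity on a neighbourhood $V$ of $\partial M$, together with $F(M\setminus V)$ being a compact set disjoint from $C$, forces $d=1$ at points of $\Omega$ close to $C$.
\end{itemize}
Your compactness argument for injectivity near $\partial M$ is sound. Injectivity on some open neighbourhood $V$ of $\partial M$ is all you need; the extra claim that $F(V)$ is a one-sided collar of $C$ is true but unused. You use $F(M^\circ)\cap C=\emptyset$ before proving it, but the proof given later is independent, so there is no circularity.

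As for what each approach buys: the paper's is shorter, but it relies on identifying the local degree with the winding number of the boundary curve, an argument-principle fact. It also genuinely needs orientability and connectedness to turn the signed count into a cardinality; a mod~$2$ degree would only give parity. Yours is longer but self-contained, resting only on the Jordan curve theorem and the covering property of proper local diffeomorphisms. It never uses the orientation or connectedness of $M$, so it proves a slightly more general statement, in which the orientability of $M$ comes out as a consequence.
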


\begin{proof}
The Jacobian of $F$ has a constant sign $\varepsilon$ on $M$. For
$y\notin C$ the set $F^{-1}(y)$ is contained in the interior of $M$ and is
finite, and
$\varepsilon\,\#F^{-1}(y)=\deg(F,y)=\wind(F|_{\partial M},y)$, which
is $0$ for $y\notin\overline\Omega$ and $\pm1$ for $y\in\Omega$. Thus
$F$ maps the interior of $M$ injectively onto $\Omega$ (no interior point
maps to $C$, since $F$ is a local diffeomorphism on the interior and
points outside $\overline\Omega$ have no preimage). Together with the injectivity of $F$ on $\partial M$, $F$ is
an injective immersion of a compact surface, hence an embedding onto
$\overline\Omega$.
\end{proof}

\begin{proof}[End of proof of Theorem \textup{\ref{thm:A}}]
We first show that $x(M)\neq S$. The induced boundary tangent is $-t$ and
the inward conormal is $-\mu$, so the geodesic curvature of $\partial M$
is $k_g=\langle-e_r,-\mu\rangle=-n$ by \eqref{eq:boundary}, with $n$ the
constant of Theorem~\ref{thm:B}. Since $K=H^2$, Gauss--Bonnet gives
\[
\Area_g(M)=\frac{2\pi(1+n)}{H^2}<\frac{4\pi}{H^2}=\Area(S),
\]
because $n^2=1-H^2<1$. If $x(M)=S$, the area formula for the local
isometry $x\colon M\to S$ would give $\Area_g(M)\geq\Area(S)$, a
contradiction.

Let $y_0\in S\setminus x(M)$ and let $\sigma$ be the stereographic
projection from $y_0$. Then $F=\sigma\circ x$ is an immersion into $\R^2$
whose restriction to $\partial M$ is a diffeomorphism onto the round
circle $\sigma(x(\partial M))$. By Lemma~\ref{lem:degree}, $F$ is an
embedding onto the closed disk bounded by that circle, so $x$ is an
embedding onto the spherical cap of $S$ bounded by the unit circle and
not containing $y_0$.
\end{proof}

\bibliographystyle{amsalpha}
\bibliography{bib}

\end{document}